\newtheorem{theorem}{Theorem}
\newtheorem{definition}{Definition}
\newtheorem*{PropositionI}{Galois' Proposition I}
\title[Symmetric Polynomials]{The Fundamental Theorem on Symmetric Polynomials: History's First Whiff of Galois Theory}
\author{Ben Blum-Smith}
\address{Ben Blum-Smith\\Courant Institute of Mathematical Sciences\\New York University\\251 Mercer St.\\New York, NY 10012}
\email{ben@cims.nyu.edu}
\author{Samuel Coskey}
\address{Samuel Coskey\\Department of Mathematics\\Boise State University\\1910 University Dr\\Boise, ID 83725}
\email{scoskey@nylogic.org}
\urladdr{boolesrings.org/scoskey}
\begin{document}

\maketitle

Evariste Galois' (1811--1832) short life is one of the classic romantic tragedies of mathematical history.  The teenage Galois developed a revolutionary theory of equations, answering more fully than ever before a centuries-old question: why can't we find a formula for solving quintic polynomials analogous to the quadratic, cubic and quartic formulas?  Then he died in a duel, probably over the honor of a woman \cite[p.\ 290]{stillwell}, before his twenty-first birthday.  His discoveries lay in obscurity for 14 years, until Joseph Liouville encountered them, recognized their importance, and made them known \cite[p.\ 1]{edwards}, \cite[p.\ 290]{stillwell}.  Over the next few decades, the ideas Galois introduced -- groups and fields -- transcended the problem they were designed to solve, and reshaped the landscape of modern mathematics.

This story is told and retold in popularizations of mathematics.  Less frequently discussed is the actual content of Galois' discoveries.  This is usually reserved for a course in advanced undergraduate or graduate algebra.  This article is intended to give the reader a little of the flavor of Galois' work through a theorem that plays a unique role in it.  This theorem appears to have been understood, or at least intuited and used, by Newton, as early as 1665.  By the turn of the nineteenth century it was regarded as well known.  For Galois himself, it was the essential lemma on which his entire theory rested.  This theorem is now known as the Fundamental Theorem on Symmetric Polynomials (FTSP).

This essay has three goals: the first expository, the second pedagogical, and the third mathematical. Our expository goal is to articulate the central insight of Galois theory---the connection between \emph{symmetry} and \emph{expressibility}, described below---by examining the FTSP and its proof.  Here we intend to reach any mathematics students or interested laypeople who have heard about this mysterious ``Galois theory'' and wish to know what it's all about. Our point of view (elaborated in Sections~\ref{backstory} and~\ref{ftspingalois}) is that the FTSP manifests the central insights of the theory, so that the interested reader can get a little taste of Galois theory from this one theorem alone.

We also wish to reach readers who have studied Galois theory but feel they missed the forest for the trees. After all Galois theory has been substantially reformulated since Galois' time, and only the modern formulation is typically treated in university classes.  For example, Galois' reliance on the FTSP has been replaced with the elementary theory of vector spaces over a field, a theory unavailable in the 1820's.  A student of the modern theory may not even immediately recognize what we are calling the central insight---the connection between symmetry and expressibility---in what they have learned. In Section~\ref{ftspingalois} we address this by placing the FTSP in the context of the theorems Galois used it to prove, and link these in turn with the modern formulation.

Our pedagogical aim comes from the approach we take to the theorem. Our narrative arose out of an informal inquiry-based course in group theory and the historical foundations of Galois theory.\footnote{This course was given by Ben in 2009--10 to a small group of teachers and mathematicians including Samuel, Kayty Himmelstein, Jesse Johnson, Justin Lanier, and Anna Weltman.}  In it, we posed the problem of trying to give a na\"ive proof of the theorem before learning the classical proof. In Sections~\ref{backstory} and~\ref{twoandthree}, we describe the participants' encounter with this problem, and in doing so we hope to showcase the pleasure of mathematical discovery, as well as provide a classroom module for other instructors and students.

Our mathematical goals arise directly from this pedagogical experience. The classical proof of the FTSP, given in Section~\ref{classical}, involves a clever trick that diverges from the participants' proof ideas and is therefore, from a pedagogical standpoint, a bit of a \emph{deus ex machina}. The participants' work in the course inspired us to develop a new proof that replaces this trick with another method (Section~\ref{spreadness}), or really any of a family of other methods (Section~\ref{spreadnessandlex}), that are more consonant with the direction of the participants' thinking. Our view is that the new arguments shed light on what the classical proof was really doing all along. The explication of these proof variants and their relationship to the classical proof is our mathematical aim.


We would like to thank Benjamin Weiss for his assistance in tracking the history of the FTSP, and especially the provenance of the classical proof given in Section~\ref{classical}; Harold Edwards for a clarifying conversation about Galois' Proposition~I, discussed in Section~\ref{ftspingalois}; and Walter Stromquist and several anonymous referees for very helpful comments.

\section{The back story}\label{backstory}

The FTSP states that any polynomial in $n$ variables $x_1, \dots, x_n$ that is invariant under all permutations of the variables (i.e.,\ \emph{symmetric}) is representable in a unique way as a polynomial in the $n$ \emph{elementary symmetric polynomials},
\begin{align*}
\sigma_1&=x_1+\dots+x_n\\
\sigma_2&=x_1x_2+x_1x_3+\dots+x_{n-1}x_n=\sum_{i<j} x_ix_j\\
\sigma_3&=\sum_{i<j<k} x_ix_jx_k\\
&\;\;\vdots\\
\sigma_n&=x_1x_2\dots x_n
\end{align*}
Formally, it says:

\begin{theorem}[Fundamental Theorem on Symmetric Polynomials]
  Any symmetric polynomial in $n$ variables $x_1,\ldots,x_n$ is representable in a unique way as a polynomial in the elementary symmetric polynomials $\sigma_1,\ldots,\sigma_n$.
\end{theorem}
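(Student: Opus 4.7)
The plan is to prove existence and uniqueness by means of a single device: the lexicographic monomial ordering on polynomials in $x_1,\ldots,x_n$, in which $x_1^{a_1}\cdots x_n^{a_n}$ exceeds $x_1^{b_1}\cdots x_n^{b_n}$ when the first nonzero entry of $(a_1-b_1,\ldots,a_n-b_n)$ is positive. Write $\mathrm{LM}(f)$ for the lex-largest monomial appearing in $f$ with nonzero coefficient; a standard check shows $\mathrm{LM}(fg) = \mathrm{LM}(f)\,\mathrm{LM}(g)$.

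For existence, I first observe that if $f$ is symmetric and $\mathrm{LM}(f) = c\,x_1^{a_1}\cdots x_n^{a_n}$, then the exponents must be weakly decreasing, $a_1 \geq a_2 \geq \cdots \geq a_n$, since otherwise a permutation of the variables would produce an even larger monomial in $f$. Next I compute the leading monomial of an arbitrary product of elementary symmetric polynomials: since $\mathrm{LM}(\sigma_k) = x_1 x_2 \cdots x_k$,
\[
\mathrm{LM}(\sigma_1^{c_1}\sigma_2^{c_2}\cdots\sigma_n^{c_n}) = x_1^{c_1+c_2+\cdots+c_n}\, x_2^{c_2+\cdots+c_n}\,\cdots\,x_n^{c_n}.
\]
Setting $c_k := a_k - a_{k+1}$ (with $a_{n+1}:=0$) yields a nonnegative exponent tuple whose associated product matches the leading term of $f$. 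Subtracting $c\,\sigma_1^{c_1}\cdots\sigma_n^{c_n}$ from $f$ therefore kills this leading term, and the difference is again symmetric but has strictly smaller leading monomial. Iterating terminates because lex order well-orders the finite set of monomials of total degree at most $\deg f$, and the total degree does not grow during the procedure.

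For uniqueness, the task reduces to showing that $\sigma_1,\ldots,\sigma_n$ are algebraically independent: no nonzero polynomial $P(y_1,\ldots,y_n)$ satisfies $P(\sigma_1,\ldots,\sigma_n) = 0$ in the ring of polynomials in $x_1,\ldots,x_n$. The same leading-monomial formula is decisive here. The assignment $(c_1,\ldots,c_n) \mapsto (c_1+\cdots+c_n,\,c_2+\cdots+c_n,\,\ldots,\,c_n)$ is invertible, with inverse $c_k = a_k - a_{k+1}$, so distinct exponent tuples $(c_1,\ldots,c_n)$ produce distinct leading monomials in $x_1,\ldots,x_n$. Hence the leading monomials contributed by the distinct terms of $P(\sigma_1,\ldots,\sigma_n)$ cannot cancel, and the lex-largest among them must survive, forcing $P = 0$.

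The main obstacle I expect is verifying the leading-monomial identity for $\sigma_1^{c_1}\cdots\sigma_n^{c_n}$ and the attendant bijection between admissible exponent tuples $(c_k)$ and partitions $(a_k)$. Once this combinatorial bookkeeping is in place, both the recursive reduction for existence and the cancellation argument for uniqueness flow from the same source, and the only remaining subtlety is ensuring that the existence algorithm actually terminates — which is where the well-ordering of lex order on bounded-degree monomials earns its keep.
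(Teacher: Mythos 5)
Your proposal is correct and is essentially the classical lexicographic-order proof (Van der Waerden's, as presented in the paper following Sturmfels): same reduction via leading terms with weakly decreasing exponents, same choice of $\sigma_1^{a_1-a_2}\cdots\sigma_n^{a_n}$ to cancel the leading monomial, same termination argument bounding the degree, and same injectivity-of-exponent-tuples argument for uniqueness. The paper also gives a second, genuinely different ``spreadiness'' proof replacing lex order with the sum of squares of the exponents, but that is not the route you took.
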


For example, since the polynomial $d=(x_1-x_2)^2$ is unchanged by transposing the two variables, the theorem guarantees $d$ can be expressed in terms of $\sigma_1=x_1+x_2$ and $\sigma_2=x_1x_2$.  In this case the expression is easy to find: $d=(x_1+x_2)^2-4x_1x_2=\sigma_1^2-4\sigma_2$.

The importance of the theorem to the theory of equations stems from the fact known as \emph{Vieta's theorem}, that the coefficients of a single-variable polynomial are precisely the elementary symmetric polynomials in its roots:

\begin{theorem}[Vieta's Theorem]
  Let $p(z)$ be an $n^\mathrm{th}$ degree monic polynomial with roots $\alpha_1,\alpha_2, \dots, \alpha_n$.  Let $\sigma_1,\dots,\sigma_n$ be the $n$ elementary symmetric polynomials in the $\alpha_i$.  Then
  \[p(z)=z^n-\sigma_1z^{n-1}+\sigma_2z^{n-2}-\dots+(-1)^n\sigma_n
  \]
\end{theorem}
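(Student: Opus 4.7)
The plan is to prove Vieta's theorem by direct expansion of the factored form of $p(z)$, using the hypothesis that $\alpha_1,\ldots,\alpha_n$ are the roots. Since $p(z)$ is monic of degree $n$ with these roots, the factor theorem (applied $n$ times) gives
\[
p(z) = (z-\alpha_1)(z-\alpha_2)\cdots(z-\alpha_n).
\]
The task then reduces to expanding this product and matching the coefficient of $z^{n-k}$ against $(-1)^k\sigma_k$ for each $k=0,1,\ldots,n$.

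Next I would expand the product by the distributive law. Each term in the expansion is obtained by selecting, from every one of the $n$ factors, either the summand $z$ or the summand $-\alpha_i$. A term of degree $n-k$ in $z$ arises precisely when $z$ is chosen from exactly $n-k$ of the factors and $-\alpha_i$ is chosen from the remaining $k$. If $S=\{i_1<i_2<\cdots<i_k\}$ denotes the index set from which the $-\alpha_i$ are drawn, the resulting monomial is $(-1)^k \alpha_{i_1}\alpha_{i_2}\cdots\alpha_{i_k} z^{n-k}$. Summing over all $k$-element subsets $S\subseteq\{1,\ldots,n\}$ yields
\[
[z^{n-k}]\,p(z) = (-1)^k \sum_{i_1<\cdots<i_k} \alpha_{i_1}\cdots\alpha_{i_k} = (-1)^k \sigma_k,
\]
which is exactly the claim.

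Alternatively, one could proceed by induction on $n$: the base case $n=1$ gives $p(z)=z-\alpha_1$, and the inductive step would multiply the expansion for $n-1$ roots by $(z-\alpha_n)$, tracking how coefficients combine via the Pascal-type identity $\sigma_k(\alpha_1,\ldots,\alpha_n) = \sigma_k(\alpha_1,\ldots,\alpha_{n-1}) + \alpha_n\,\sigma_{k-1}(\alpha_1,\ldots,\alpha_{n-1})$. This is slightly heavier in notation, so I would favor the direct expansion.

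There is no serious obstacle here; the content is essentially combinatorial bookkeeping. The only point requiring a little care is the alternating sign, which must be tracked through the $k$ choices of $-\alpha_i$, and the convention $\sigma_0 = 1$ which handles the leading $z^n$ term.
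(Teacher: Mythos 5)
Your proof is correct, and it simply carries out in full the ``straightforward computation'' that the paper mentions but does not write down: factor $p(z)=\prod_i(z-\alpha_i)$ via the factor theorem and expand, reading off the coefficient of $z^{n-k}$ as $(-1)^k\sigma_k$. This is the standard (and essentially the only natural) argument, so there is nothing to contrast.
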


The proof is a straightforward computation, but its ease belies its significance.  With this fact in hand, the FTSP becomes the statement that given any polynomial equation $p(z)=0$, any symmetric polynomial in its roots is actually a polynomial in its coefficients, which can be written down without (in fact, on the way to) solving the equation.  Continuing the example from above, if $x_1$ and $x_2$ are the roots of a monic quadratic polynomial, then that polynomial is $p(z)=z^2-\sigma_1z+\sigma_2$. The theorem guarantees that the discriminant $d$ of $p(z)$ (defined as the square of the difference between the roots) would have an expression in terms of the coefficients.  This of course is key to the quadratic's solution: $\sqrt{d}$ is the difference between the roots and $\sigma_1$ is the sum of the roots; and the roots themselves can be deduced from these two values.  Since $d$ can be expressed in terms of the coefficients, it follows that the roots can too.

This is the form in which the FTSP played its seminal historical role.  As mentioned above, it appears to have been at least intuited by Newton \cite[pp.~6--8]{edwards}, who also gave a formula (now known as Newton's Theorem) that effectively proves the special case of power sums.\footnote{Newton's Theorem states that if $p_j = \sum x_i^j$ is the $j$th power sum, then the power sums and elementary symmetric polynomials together obey the relation $p_k - p_{k-1}\sigma_1 + p_{k-2}\sigma_2 - \dots \pm k\sigma_k = 0$. (If $k>n$, we are to interpret $\sigma_j$ as zero for $j>n$.) This formula allows one recursively to construct formulas for the power sums in terms of the elementary symmetric polynomials.} The result embedded itself in the common knowledge of mathematicians over the course of the eighteenth century, through the calculations of many different people (\cite[p.\ 99]{tignol}, \cite[pp.~7-8]{edwards}). For a discussion of some of its historical applications prior to Galois' work, see Greg St.~George's delightful essay ``Symmetric Polynomials in the Work of Newton and Lagrange" in \emph{Mathematics Magazine} \cite{greg}.


As mentioned above, the FTSP brings out one of the central insights of Galois theory, which is the connection between \emph{symmetry} and \emph{rational expressibility}.  We have a polynomial $p(z)$, whose coefficients we know.  Even if we don't know the roots, the FTSP tells us that \emph{symmetric} expressions in the roots are \emph{rationally expressible} in terms of the coefficients.  As a corollary, if the coefficients of $p(z)$ are rational numbers, then every symmetric expression in the roots (for instance the sum of their squares) has a rational value as well. Symmetry guarantees rational expressibility.  In the last section we will indicate how this fits into the bigger picture of Galois theory.

In our course on Galois theory, we did not approach the FTSP directly, but rather sidled up to it by considering some problems of historical significance that implicitly depend on it.  The first was a problem of Newton: given two polynomials $f,g$, how can one determine whether they have a root in common without finding the roots?  (This problem is discussed at length in Greg St.~George's essay.)  The second was posed by Gauss in his \emph{Disquisitiones Arithmeticae}: given a polynomial $f$, without finding its roots, determine a polynomial $g$ whose roots are the squares, or cubes, etc.,\ of the roots of $f$. 

Participants solved both of these problems for polynomials of low degree.  The solutions were accomplished by writing desired expressions in the roots, which turned out to be symmetric, and then expressing these in terms of the coefficients instead.  For example, they considered Gauss' problem for a quadratic: if $f(z)=z^2-\sigma_1z+\sigma_2$, write down $g$ whose roots are the squares of $f$'s.  In this case, if $\alpha_1,\alpha_2$ are the roots of $f$, then $\alpha_1^2,\alpha_2^2$ are the roots of $g$, so that
\[
g=(z-\alpha_1^2)(z-\alpha_2^2)=z^2-(\alpha_1^2+\alpha_2^2)z+\alpha_1^2\alpha_2^2
\]
To write down this polynomial without actually solving $f$, it would be necessary to have expressions for the coefficients $\alpha_1^2+\alpha_2^2$ and $\alpha_1^2\alpha_2^2$ in terms of $f$'s coefficients $\sigma_1$ and $\sigma_2$.  You may enjoy looking for them yourself before reading the next line.
\begin{align*}
\alpha_1^2+\alpha_2^2 &= \sigma_1^2-2\sigma_2\\
\alpha_1^2\alpha_2^2 &= \sigma_2^2
\end{align*}

Participants were able to find such expressions in every case we considered, and so began to suspect that something like the FTSP would be true.  It was clear that any expression in the roots of a polynomial would have to be symmetric to be expressible in terms of the coefficients, since the coefficients are already symmetric.  But it was not clear that any symmetric expression in the roots would be expressible in the coefficients.

\section{The two and three variable case}\label{twoandthree}

In this section we begin to approach the question of why any symmetric expression in the roots is expressible in terms of the coefficients from the na\"ive point of view of the participants.  It is natural to begin with the special cases in which the polynomial has just two and then three variables.  The participants were able to cobble together proofs in these two cases over the course of two meetings.

To start, let $p(x,y)$ be a polynomial which is symmetric in $x$ and $y$.  We want to show that it can be expressed as a polynomial in $\sigma_1=x+y$ and $\sigma_2=xy$.  Taking an arbitrary monomial $x^my^n$ which appears in $p(x,y)$, we will try to ``take care of it'' by expressing it in terms of $\sigma_1$ and $\sigma_2$.  Renaming the variables if necessary, we can suppose that $m\geq n$.  If $n>0$, then we can already write $x^my^n$ as $\sigma_2^nx^{m-n}$, so it suffices to deal with monomials of the form $x^n$.  For this, note that the symmetry of $p(x,y)$ implies its conjugate monomial $y^n$ is also a term of $p(x,y)$, so we can deal with $x^n+y^n$ together.  Now, we recognize $x^n+y^n$ as the first and last terms of $\sigma_1^n=(x+y)^n$.  Hence, we have that
\begin{align*}
  x^n+y^n&=\sigma_1^n-\binom{n}{1}xy^{n-1}-\cdots-\binom{n}{n-1}x^{n-1}y\\
         &=\sigma_1^n-\sigma_2q(x,y)\;,
\end{align*}
where $q(x,y)$ is a polynomial of degree $n-2$.  This shows that an induction on the degree of $p(x,y)$ will succeed.

In the case of three variables, let $p(x,y,z)$ be a polynomial which is symmetric in $x,y,z$.  We wish to express $p(x,y,z)$ as a function of $\sigma_1=x+y+z$, $\sigma_2=xy+xz+yz$, and $\sigma_3=xyz$.  Again consider an arbitrary monomial $x^my^nz^p$ in $p(x,y,z)$, where for convenience we assume that $m\geq n\geq p$.  If $p>0$ then we can write $x^my^nz^p$ as $\sigma_3^px^{m-p}y^{n-p}$, leaving a monomial with just two variables to deal with.  In other words, we only need to treat monomials of the form $x^my^n$.  Now, all of the conjugate monomials $x^nz^m$, $x^mz^n$, $x^nz^m$, $y^mz^n$ and $y^nz^m$ are also found in $p(x,y,z)$.  In analogy to the two variable case, we now recognize that these are all terms of
\[\sigma_1^{m-n}\sigma_2^n=(x+y+z)^{m-n}(xy+xz+yz)^n\;.
\]
Thus, we can write
\[x^my^n+x^nz^m+x^mz^n+x^nz^m+y^mz^n+y^nz^m=\sigma_1^{m-n}\sigma_2^n-q(x,y,z)\;.
\]
Unlike the two variable case, the leftover terms (which we denoted $q(x,y,z)$) need not have a common factor.  However, any term of $q(x,y,z)$ which happens to involve just two variables must be a conjugate of $x^ky^l$, where $m>k\geq l>n$ and $k+l=m+n$.  So while we have not reduced the degree in every case, in the cases where we have not we have nonetheless improved the situation in one key way: we have reduced the \emph{spread} between the exponents.  In other words, this time we will succeed using an induction which takes into account both the degree \emph{and} the spread between the exponents in the case of monomials with just two variables.

It is natural to try to generalize this method to four and more variables, but there are some difficulties.  For starters, it is not clear what the ``spread between the exponents" would \emph{mean} when there are more than two variables in play!  While it would have been nice to let the discussion unfold and try to turn this into a general proof, the instructor (Ben) decided in the interest of time to wrap up the FTSP by presenting one of the standard arguments.

\section{A classical proof}\label{classical}

In this section we present a classical proof of the FTSP. Our presentation follows that of Sturmfels \cite{sturmfels}.  The proof itself goes back at least to Gauss.\footnote{Tignol \cite[p.~99]{tignol} credits Waring's 1770 \emph{Meditationes Algebraicae} \cite{waring} for this proof, since it includes the key construction of Equation~\eqref{specialprod}. However Waring does not mention the lexicographic order or argue that the algorithm terminates. Gauss' proof \cite[pp.~36--37]{gauss}, which does both of these, is dated 1815. We would like to thank Benjamin Weiss for referring us to Gauss' proof.} In the next section, we will return to the participants' proof idea. 



\begin{proof}[Proof of the FTSP]
  Let $f$ be the symmetric polynomial to be represented.  The set of $f$'s terms of a given degree is itself a symmetric polynomial and if we can represent each of these as a polynomial in the $\sigma_i$, we can represent $f$; thus nothing is lost by assuming that $f$ is homogeneous.

  Now, order the terms of $f$ lexicographically.  That is, put the term with the highest power of $x_1$ \emph{first}, and if there is a tie, decide in favor of the term with the most $x_2$, and so on.  Formally, define $ax_1^{i_1}x_2^{i_2}\dots x_n^{i_n}>bx_1^{j_1}x_2^{j_2}\dots x_n^{j_n}$ if $i_1>j_1$, or if $i_1=j_1$ and $i_2>j_2$, or if $i_1=j_1$, $i_2=j_2$ and $i_3>j_3$, etc.,\ and then order the terms of $f$ so that the first term is $>$ the second which is $>$ the third, and so on.

  Because $f$ is symmetric, for every term $cx_1^{i_1}x_2^{i_2}\dots x_n^{i_n}$ in it, it also contains all possible terms that look like this one except with the exponents permuted (its ``conjugates").  It follows that the leading term of $f$, say $c_1x_1^{i_1}x_2^{i_2}\dots x_n^{i_n}$, has $i_1\geq i_2\geq \dots \geq i_n$.  We let 

\begin{equation}\label{specialprod}
g_1=c_1\sigma_1^{i_1-i_2}\sigma_2^{i_2-i_3}\cdots\sigma_{n-1}^{i_{n-1}-i_n}\sigma_n^{i_n}\;.
\end{equation}
Then $g_1$ is symmetric, and it is easy to see that it has the same leading term as $f$.  Thus $f-g_1$ is symmetric with a ``lower'' leading term, which we denote $c_2x_1^{j_1}x_2^{j_2}\cdots x_n^{j_n}$.  As before, it follows from the symmetry that $j_1\geq j_2\geq\cdots\geq i_n$.  Thus we can let $g_2=c_2\sigma_1^{j_1-j_2}\sigma_2^{j_2-j_3}\cdots\sigma_n^{j_n}$, so that $g_2$ has the same leading term as $f-g_1$, and $f-g_1-g_2$ has a leading term that is lower still.

  Continue in like manner.  The algorithm must eventually terminate with no terms remaining, because there are only finitely many possible monomials $x_1^{i_1}x_2^{i_2}\cdots x_n^{i_n}$ of a given degree in the first place.  Thus we must come to a point where we have $f-g_1-g_2-\cdots-g_k=0$.  Then $f=g_1+g_2+\cdots+g_k$ is the desired representation of $f$ as a polynomial in the $\sigma_i$.  

  To prove its uniqueness, it is sufficient to show that the zero polynomial in $x_1,\ldots,x_n$ is representable uniquely as the zero polynomial in $\sigma_1,\ldots,\sigma_n$.  This is so because no two distinct products of elementary polynomials $\sigma^{k_1}\cdots\sigma^{k_n}$ have the same leading term.  (The leading term of $\sigma_1^{k_1}\cdots\sigma_n^{k_n}$ is $x_1^{k_1+\dots+k_n}x_2^{k_2+\dots+k_n}\cdots x_n^{k_n}$, and the map $(k_1,\ldots,k_n)\mapsto(k_1+\cdots+k_n,\ldots,k_{n-1}+k_n,k_n)$ is injective.)  Thus the leading terms in a sum of distinct products of elementary symmetric polynomials cannot cancel; so such a sum cannot equal zero unless it is empty.
\end{proof}

This lexicographic-order argument is elegant, simple, and highly constructive. From a pedagogical standpoint, however, it depends on a very counterintuitive move. Lexicographic order (lex, for short) is a total order on the set of monomials.\footnote{In fact, it is even better than this: it is a \emph{monomial order}, i.e.,\ a well-order that is compatible with multiplication.} It determines a unique leading term in any polynomial, and in fact this is (prima facie) part of how the proof works.  The proof conjures in one's mind an image of the terms of $f$ totally ordered and then picked off one-by-one, left to right, by our careful choice of $g_1,\ldots,g_k$.

However, since $f$ and $g_1,\ldots,g_k$ are all symmetric, the terms are not really being picked off one at a time.  Forming $f-g_1$ not only cancels the leading term $c_1x_1^{i_1}x_2^{i_2}\cdots x_n^{i_n}$, but all of its conjugates as well (for instance, the ``trailing term" $c_1x_1^{i_n}x_2^{i_{n-1}}\cdots x_n^{i_1}$).  Somehow, the lex ordering obscures the symmetry between the conjugates by distinguishing one of them as the leading term, even while it exploits this symmetry to make the proof work.

In this way it diverges sharply from the participants' naive attempts to prove the theorem, all of which dealt with all the monomials in a given conjugacy class on an equal footing. This makes the appeal to lex order highly unexpected, which is part of the proof's charm, but it also raises the (essentially mathematical, but pedagogically resonant) question of whether it is possible to give a version of the proof without this unexpected disruption of symmetry.

To look at it from another angle, the order in which the algorithm given in this proof operates on the terms of $f$ is not actually the lex order.  Rather it is the order that lex order induces on the set of \emph{conjugacy classes} of terms.  The first conjugacy class is the one containing the lexicographically leading term, the second contains the lexicographically highest-ranking term not contained in the first, etc.  We could call this \emph{symmetric lexicographic order}. Note that it is no longer a total order on the monomials (only on the conjugacy classes). Thus the proof's appeal to lex order is somehow deceptive. The real order is something else. From this angle, the pedagogically pregnant question becomes, are there descriptions of symmetric lex order that do not pass through actual lex order?

It was the sense of dissonance described here, between the participants' approach and the one taken in this classical proof, that led us to return to the idea of ``spread between the exponents" mentioned in the last section. This idea ultimately brought answers to the above questions, in the form of both an alternative proof, and a much richer understanding of the above proof.\footnote{It should be noted that many proofs of the FTSP are known, and they do not all share the surprising symmetry-breaking feel of the lex proof. Some of our favorites are the one found in \cite[~pp.9-12]{edwards} and the one found in \cite[~pp.204-5]{lang} and \cite[~pp.550-1]{artin}. In fact, one can derive the FTSP from Galois theory itself, rather than the reverse, because the modern development of the latter no longer depends on the former, as is done in \cite{hungerford}. The lex proof nonetheless stands out as especially (i) constructive, in that the algorithm it gives is practical for writing symmetric polynomials in terms of the elementary ones; (ii) short; and (iii) enduringly popular -- in addition to the citations above, see for example \cite{jacobson}, \cite{cohn}, \cite{tignol}, \cite{rotman}, and the original classic abstract algebra text, \cite{vanderwaerden}.}

\section{Spreadness proof}\label{spreadness}

We return to the ideas of our proof in the two and three variable cases and develop them into a complete argument.  Recall that to generalize our ideas, we first need to overcome the difficulty of deciding what the ``spread between the exponents'' means when there is a larger number of variables.  Indeed, finding this definition is the linchpin of our strategy.  We will give a definition (and later, a family of definitions) that allow us to prove the theorem by building an algorithm that picks off the monomials with the most spread-out exponents first.  The algorithm is identical in spirit and similar in practice to the standard one, but uses spread-out-ness (what we henceforth call ``spreadness") rather than lex order to determine which monomials to cancel out first. It thus carries out the classical proof's program while avoiding the symmetry disruption imposed by the lexicographic ordering (answering ``yes" to our first pedagogically resonant question above).

In Section~\ref{twoandthree}, our most na\"ive idea for defining ``spread'' was to use the highest exponent minus lowest.  Unfortunately, a simple computation shows this will not work in general. In terms of statistics, this is analogous to the range of the dataset of exponents of a given monomial $x_1^{i_1}\dots x_n^{i_n}$. But the range is not a good measure of dispersion because it does not involve all of the exponents. Instead we consider the following.

\begin{definition}
  Given a monomial $x_1^{i_1}\cdots x_n^{i_n}$ define its \textbf{spreadness} to be the sum $i_1^2+\dots+i_n^2$.
\end{definition}

Again in terms of statistics, this is equivalent to (in the sense that it induces the same ordering as) the variance of the dataset of exponents. The spreadness is also equivalent to the height of the center of gravity of the monomial when it is pictured as a pile of bricks, with a stack of $i_k$ bricks corresponding to each $x_k$.  (We will show this below.)  Moreover, it is a nonnegative integer, allowing us to use it as the basis of an induction.

The key fact to establish is that just as $c_1x_1^{i_1}x_2^{i_2}\dots x_n^{i_n}$, with $i_1\geq i_2\geq \dots \geq i_n$, is the leading term of $c_1\sigma_1^{i_1-i_2}\sigma_2^{i_2-i_3}\cdots \sigma_n^{i_n}$ when the terms are ordered lexicographically, it and all its conjugates also have strictly greater spreadness than the rest of the terms of this latter product.

\begin{theorem}[Spreadness Lemma]
  Given $i_1,\ldots,i_n$ with $i_1\geq i_2\geq \dots \geq i_n$, the terms of $\sigma_1^{i_1-i_2}\sigma_2^{i_2-i_3}\cdots \sigma_n^{i_n}$ with maximum spreadness are precisely $x_1^{i_1}x_2^{i_2}\cdots x_n^{i_n}$ and its conjugates.
\end{theorem}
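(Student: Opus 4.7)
The plan is to analyze each term of $\sigma_1^{i_1-i_2}\sigma_2^{i_2-i_3}\cdots\sigma_n^{i_n}$ as arising from a sequence of choices. Writing $a_k:=i_k-i_{k+1}$ (with $i_{n+1}:=0$), we pick from each of the $a_k$ copies of $\sigma_k$ one of its constituent monomials $x_{j_1}x_{j_2}\cdots x_{j_k}$, i.e., a $k$-element subset $S_{k,\ell}\subseteq\{x_1,\ldots,x_n\}$. The exponent $e_j$ of $x_j$ in the resulting product is the number of these subsets that contain $x_j$, and the total degree is fixed: $\sum_j e_j=\sum_k k\,a_k=i_1+\cdots+i_n$ by a telescoping calculation.

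The heart of the argument is a majorization bound on the exponent vector. For any $T\subseteq\{1,\ldots,n\}$ of size $r$, each $\sigma_k$-factor contributes at most $\min(r,k)$ to $\sum_{j\in T}e_j$, since a $k$-subset can contain at most $\min(r,k)$ elements of $T$; summing and telescoping again,
\[
\sum_{j\in T}e_j\leq\sum_k\min(r,k)\,a_k=i_1+i_2+\cdots+i_r.
\]
Taking $T$ to be the indices of the $r$ largest exponents, this says that after sorting $e_1\geq e_2\geq\cdots\geq e_n$ we have $\sum_{j\leq r}e_j\leq\sum_{j\leq r}i_j$ for every $r$, with equality at $r=n$.

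To convert this into a bound on spreadiness, let $D_r:=\sum_{j\leq r}(i_j-e_j)\geq 0$, and note $D_0=D_n=0$. A standard Abel-summation identity then yields
\[
\sum_j i_j^2-\sum_j e_j^2=\sum_{j=1}^{n-1}D_j\bigl((i_j+e_j)-(i_{j+1}+e_{j+1})\bigr)\geq 0,
\]
since both sequences are non-increasing. Thus every term has spreadiness at most $\sum_j i_j^2$, and this maximum is attained by $x_1^{i_1}\cdots x_n^{i_n}$ and its conjugates: for any permutation $\pi$, picking $\{x_{\pi(1)},\ldots,x_{\pi(k)}\}$ from each $\sigma_k$-factor yields precisely the conjugate $x_{\pi(1)}^{i_1}\cdots x_{\pi(n)}^{i_n}$. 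The main obstacle is the equality case---showing that $\sum e_j^2=\sum i_j^2$ forces the multiset $\{e_j\}$ to equal $\{i_j\}$, so that the monomial really is a conjugate---but this follows from the strict convexity of $x\mapsto x^2$ by chasing the equality conditions in the identity above, or alternatively by a swap argument on the sorted sequences.
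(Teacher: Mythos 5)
Your proof is correct, but it takes a genuinely different route from the paper's. You phrase the argument in the language of majorization: every monomial in the product arises by choosing a $k$-subset from each $\sigma_k$ factor, and the key estimate is that the sorted exponent vector $(e_1,\ldots,e_n)$ of any such term is majorized by $(i_1,\ldots,i_n)$. Combining this with a Karamata-type convexity argument (your Abel-summation identity with $D_j\ge 0$ and $D_0=D_n=0$) gives $\sum e_j^2 \le \sum i_j^2$, and chasing the equality case forces $e=i$ as sorted vectors, hence the term is a conjugate. The paper instead models monomials as stacks of bricks and argues physically: spreadiness is an increasing affine function of the vertical coordinate of the center of gravity, every term of the product is obtained from the target by sliding rows of bricks sideways and letting floaters fall, and falling bricks strictly lower the center of gravity. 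These are morally the same idea seen from two angles --- the ``bricks falling'' picture is exactly a visual encoding of the majorization inequality, and ``center of gravity drops'' is the convexity step --- but your version is a self-contained algebraic proof that does not appeal to physical intuition, at the cost of being less vivid. One small remark: the equality analysis you sketch does go through, but it deserves one more sentence; from your identity, if $D_{j_0}>0$ at the first such index, the vanishing of the $j_0$ term forces $i_{j_0}=i_{j_0+1}$ and $e_{j_0}=e_{j_0+1}$, whence $D_{j_0+1}>D_{j_0}$, and iterating contradicts $D_n=0$.
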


\begin{proof}[Center of gravity proof]
  In this argument we identify a monomial $x_1^{j_1}x_2^{j_2}\ldots x_n^{j_n}$ with a sequence of stacks of heights $j_1,\ldots,j_n$ of identical bricks.  We first compute that for terms taken from $\sigma_1^{i_1-i_2}\sigma_2^{i_2-i_3}\cdots \sigma_n^{i_n}$, the spreadness is an increasing linear function of the vertical coordinate ($y$) of the center of gravity of its corresponding brick configuration.  Supposing that each brick has unit mass, then the vertical coordinate of the center of gravity is given by the sum over the bricks of each brick's height, divided by the number of bricks.  If we suppose the first brick of each stack lies at a height of $1$ and each brick has unit height, then the stack of height $j_1$ contributes $1+2+\dots+j_1=\frac{j_1(j_1+1)}{2}$ to the sum.  The full vertical coordinate $y$ of the center of gravity is then given by
  \begin{align*}
    y&=\frac{1}{d}\left(\frac{j_1(j_1+1)}{2}+\dots+\frac{j_n(j_n+1)}{2}\right)\\
    &=\frac{1}{2d}\left(j_1^2+\dots+j_n^2+j_1+\dots+j_n\right) \\
    &=\frac{1}{2d}\left(s+d\right)
  \end{align*}
  where $d$ is the number of bricks (i.e.,\ the degree), and $s$ is the spreadness.  So $s=2dy-d$ and since $d$ is fixed, $s$ is an increasing linear function of $y$ as claimed.

  Next, we observe that all of the terms of $\sigma_1^{i_1-i_2}\sigma_2^{i_2-i_3}\cdots \sigma_n^{i_n}$ can be obtained from $x_1^{i_1}x_2^{i_2}\cdots x_n^{i_n}$ by moving some bricks horizontally (and dropping them onto the top of the stack below if necessary).  The conjugates of $x_1^{i_1}x_2^{i_2}\cdots x_n^{i_n}$ are precisely those terms in which each layer of bricks rests completely on top of the layer below it before any dropping takes place.  Thus bricks will fall for precisely those terms that are not conjugates of $x_1^{i_1}x_2^{i_2}\cdots x_n^{i_n}$.  See Figure~\ref{fig:bricks}.

  Finally, we appeal to the simple fact that given any physical configuration of bricks, moving some bricks to lower positions decreases the center of gravity.
\end{proof}

\begin{figure}[h]
\def\brick{rectangle +(.95,.45)}
\begin{tikzpicture}[scale=.8]
  \foreach \x in {1,...,5} \node[anchor=south] at (\x-.5,-.6) {$x_\x$};
  \foreach \x in {0,1,2,3} \draw (\x,0) \brick;
  \foreach \x in {0,1,2} \draw (\x,.5) \brick;
  \draw (0,1) \brick;
  \draw (0,1.5) \brick;
  \draw (0,2) \brick;
\end{tikzpicture}
\hspace{1cm}
\begin{tikzpicture}[scale=.8]
  \foreach \x in {1,...,5} \node[anchor=south] at (\x-.5,-.6) {$x_\x$};
  \foreach \x in {0,2,3,4} \draw (\x,0) \brick;
  \foreach \x in {0,1,4} \draw (\x,.5) \brick;
  \draw (0,1) \brick;
  \draw (3,1.5) \brick;
  \draw (4,2) \brick;
\end{tikzpicture}
\hspace{1cm}
\begin{tikzpicture}[scale=.8]
  \foreach \x in {1,...,5} \node[anchor=south] at (\x-.5,-.6) {$x_\x$};
  \foreach \x in {0,1,2,3,4} \draw (\x,0) \brick;
  \foreach \x in {0,3,4} \draw (\x,.5) \brick;
  \foreach \x in {0,4} \draw (\x,1) \brick;
  \draw[->,dotted] (1.5,.75) -- (1.5,.25);
  \draw[->,dotted] (3.5,1.75) -- (3.5,.75);
  \draw[->,dotted] (4.5,2.25) -- (4.5,1.25);
\end{tikzpicture}
\caption{\emph{Left}: the target term $x_1^5x_2^2x_3^2x_4$.  \emph{Center}: another generic term from the product $\sigma_1^3\sigma_3\sigma_4$; in this picture the term $x_1^3x_2x_3x_4^2x_5^3$ is represented.  \emph{Right}: the same generic term with the bricks ``fallen;'' it has a lower center of mass than the target term.\label{fig:bricks}}
\end{figure}
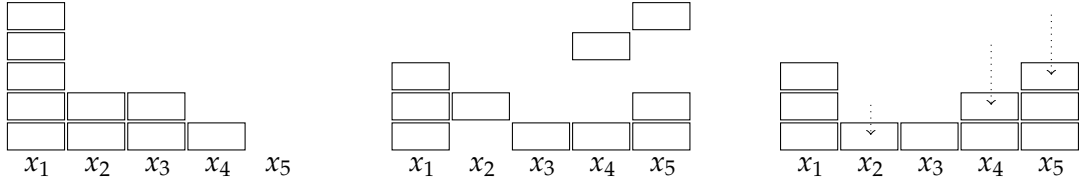

Once this is established, the proof of the fundamental theorem follows the outline of the standard argument given above.

\begin{proof}[Proof of the FTSP using the Spreadness Lemma]
  Let $f$ be the symmetric polynomial to be represented.  As above, we lose nothing by assuming $f$ is homogeneous.

  The algorithm proceeds as in the standard proof except with spreadness playing the role of lexicographic order.  Pick \emph{any} term of $f$ with maximum spreadness $s_1$ and consider it and its conjugates.  Form the product of elementary symmetric polynomials $g_1$ that has these terms as its terms of maximum spreadness.  (If the terms of $f$ have coefficient $c_1$ and exponents $i_1 \geq i_2 \geq \dots \geq i_n$, then $g_1=c_1\sigma_1^{i_1-i_2}\sigma_2^{i_2-i_3}\dots\sigma_n^{i_n}$ as always.)  Then since these terms are the only terms of $g_1$ with spreadness as high as $s_1$ by the Spreadness Lemma, $f-g_1$ contains fewer terms of spreadness $s_1$ than $f$ does, possibly zero.

  Continuing in like manner beginning with $f-g_1$, forming $g_2$ and then $f-g_1-g_2$, etc., we get an algorithm that must terminate because at each stage, either the maximum spreadness or the number of terms with this spreadness has been decreased.

  The uniqueness of the representation follows exactly as it did in the standard proof.  Distinct products of elementary symmetric polynomials will have distinct terms of maximum spreadness because of the injectivity of the map $(k_1,\dots,k_n) \mapsto (k_1+\dots+k_n,\dots,k_n)$.  Therefore complete cancellation is impossible:  any nonzero polynomial in the elementary symmetric polynomials will be nonzero when multiplied out.
\end{proof}

As an aside, we mentioned above that spreadness is also equivalent to variance.  To see this, we compute that for terms taken from $\sigma_1^{i_1-i_2}\sigma_2^{i_2-i_3}\cdots \sigma_n^{i_n}$, the spreadness $s$ is an increasing linear function of the variance $\sigma^2$ of the set $\left\{j_1,j_2,\ldots,j_n\right\}$.  Indeed,
  \begin{align*}
    \sigma^2 &= \frac1n\left(j_1^2+\cdots+j_n^2\right)-\mu^2
  \end{align*}
  Here $n$ is fixed and so is the mean $\mu$, being a function of just $n$ and the degree $d$.  Thus, $s=n\sigma^2+n\mu^2$ is an increasing linear function of $\sigma^2$.

\section{The spreadness and lex orderings}\label{spreadnessandlex}

It is natural to ask whether there is any relationship between the spreadness and lexicographic orderings on monomials. A propos of our discussion at the end of Section~\ref{classical}, the more natural comparison is between spreadness and what we there defined as \emph{symmetric lexicographic order}, i.e.\ the order that lex induces on conjugacy classes of monomials. In the Spreadness Lemma above, we have shown that the two orderings single out the same conjugacy class of monomials as leading among those that occur in a single product of the form $\sigma_1^{i_1-i_2}\sigma_2^{i_2-i_3}\cdots \sigma_n^{i_n}$.

In general, however, the two orderings do not agree. For example, $x_1^3x_2x_3x_4x_5x_6$ beats $x_1^2x_2^2x_3^2x_4^2$ lexicographically, but has a lower spreadness with a score of 14 versus 16.

Still, this can be remedied by replacing the spreadness with the $p^\text{th}$ moment (that is, $i_1^p+\cdots+i_n^p$) for suitably large $p$.\footnote{We would like to thank Walter Stromquist for this observation.} In the above examples, letting $p=3$, the new score becomes 86 versus 64.

To see that this can be done generally, let $x_1^{i_1}\cdots x_n^{i_n}$ and $x_1^{j_1}\ldots x_n^{j_n}$ be given with $i_1\geq\cdots\geq i_n$ and $j_1\geq\cdots\geq j_n$. Assume that $x_1^{i_1}\cdots x_n^{i_n}$ precedes $x_1^{j_1}\ldots x_n^{j_n}$ lexicographically, and let $k$ be the least such that $i_k>j_k$. Then we may choose $p$ large enough that $i_k^p>nj_k^p$, and it follows easily that $i_1^p+\cdots+i_n^p>j_1^p+\cdots+j_n^p$. This proves:

\begin{theorem}\label{symlimit}
Symmetric lex order is the limit of the order on conjugacy classes of monomials given by the $p^\text{th}$ moment as $p\to\infty$, in the sense that given any finite set of classes, for all sufficiently high $p$ the $p^\text{th}$ moment order on those classes matches the symmetric lex order.
\end{theorem}

This provides an answer to our second mathematical-but-pedagogically-motivated question from the end of Section~\ref{classical}: a way to characterize symmetric lex order without passing through lex order. To our taste, this characterization shows that symmetric lex order is ``more natural" than is obvious from its definition via (actual) lex order.

Moreover it is possible to give a version of the Spreadness Lemma for any of the higher moments, although the proof is somewhat more involved without the center-of-gravity interpretation available.

\begin{theorem}[Spreadness Lemma for higher moments]
  Given $i_1,\ldots,i_n$ with $i_1\geq i_2\geq \dots \geq i_n$, the terms of $\sigma_1^{i_1-i_2}\sigma_2^{i_2-i_3}\cdots \sigma_n^{i_n}$ with maximum $p^\text{th}$ moment, for $p>1$, are precisely $x_1^{i_1}x_2^{i_2}\cdots x_n^{i_n}$ and its conjugates.
\end{theorem}

\begin{proof}[Proof outline]
  The terms $x_1^{j_1}x_2^{j_2}\cdots x_n^{j_n}$ of $\sigma_1^{i_1-i_2}\sigma_2^{i_2-i_3}\cdots \sigma_n^{i_n}$ all satisfy the following conditions: every exponent $j_k$ is $\leq i_1$, every sum of two exponents $j_k+j_{k'}$ is $\leq i_1+i_2$, and more generally every sum of $l$ many exponents is $\leq i_1+\cdots+i_l$, with equality for $l=n$. Thus each term corresponds to a lattice point $(j_1,j_2,\dots,j_n)$ in the first quadrant of $\mathbb{R}^n$, contained in the convex polytope $P$ cut out by the inequalities
  \begin{align*}
    z_k&\leq i_1,\;\forall k\in [n]\\
    z_k+z_k'&\leq i_1+i_2, \;\forall k,k'\in [n]\\
       &\:\;\vdots \\
    z_{k_1}+\cdots + z_{k_l} &\leq i_1+\cdots+i_l, \;\forall k_1,\dots,k_l\in [n]\\
       &\;\;\vdots\\
    z_1+\cdots+z_n &= i_1+\cdots+i_n
  \end{align*}
  Furthermore, the term $x_1^{i_1}x_2^{i_2}\cdots x_n^{i_n}$ and its conjugates correspond exactly to those lattice points that realize equality in each of the above inequalities for some choice of $k$'s.  In other words, to \emph{the vertices of the convex polytope $P$}. This is because, in the language and imagery of the center of gravity proof of the Spreadness Lemma, equality is realized in each inequality (for a maximizing choice of $k$'s) if and only if no brick has fallen.  If a brick in the $l$th highest stack falls to a lower stack, this implies that the highest $l$ stacks now have a lower total than they did originally.
  
  
  
  Now we appeal to the fact that the $p^\text{th}$ moment is a monotone function of the $L^p$ norm on $\mathbb{R}^n$, and for $p>1$ this norm is strictly subadditive, i.e. equality holds in $\|u+v\| \leq \|u\|+\|v\|$ only when one of $u,v$ is a nonnegative multiple of the other.  It follows that if $\|u\|=\|v\|$ and $u\neq v$, any nontrivial convex combination of $u,v$ has strictly smaller norm than $u,v$ have. (One sees this by applying the inequality to $\|\mu u + \nu v\|$ with $\mu,\nu>0$ and $\mu+\nu=1$.)  More generally, if the extreme points of a convex body all have the same norm, all the other points of the body must have strictly smaller norm.  In the present case, the tuples $(j_1,\dots,j_n)$ corresponding to $x_1^{i_1}\cdots x_n^{i_n}$ and its conjugates all have the same $L^p$ norm $\sqrt[p]{i_1^p+\cdots+i_n^p}$. Since they are the vertices of a convex polytope containing the tuples corresponding to all the other terms, these latter must have smaller $L^p$ norm and therefore smaller $p^\text{th}$ moments. 
\end{proof}
  
Thus, the FTSP can be proven using the $p^\text{th}$ moment for \emph{any} $p>1$. The spreadness proof given in Section~\ref{spreadness} is only the ``first'' in an infinite sequence of nearly identical proofs, and the classical proof in Section~\ref{classical} is, by Theorem~\ref{symlimit}, in some sense the ``last.'' All the proofs have in common an algorithm that represents an arbitrary symmetric polynomial $f$ by forming products of elementary symmetric polynomials $\sigma_k$ in such a way as to mimic $f$'s terms of maximum exponent dispersion first; thus they are all fundamentally inductions on the extent of exponent dispersion---the namesake of our notion of spreadness. Each proof measures exponent dispersion a little differently but they all agree about the terms of maximum dispersion in expansions of monomials in the $\sigma_k$'s. They all agree because these terms correspond to the extreme points of certain convex polytopes in $\mathbb{R}^n$, although we have other, easier ways to see this in the ``first'' and ``last'' cases. Since the order in which the classical algorithm operates on $f$ comes from the limit of these ways of measuring, we can see it as really having in some sense been measuring exponent dispersion all along! This is the ``richer understanding'' of the classical proof promised at the end of Section~\ref{classical}.

\section{The FTSP in Galois' work}\label{ftspingalois}

In this concluding section we place the FTSP in the greater context of Galois theory by showing how it is an example of a larger phenomenon.  The FTSP says that expressions that are \emph{completely} symmetric are \emph{completely} rationally expressible.  In his seminal essay \emph{M\'{e}moire sur les conditions de r\'{e}solubilit\'{e} des \'{e}quations par radicaux}, Galois proved a series of results that tie types of partial symmetry to types of partially rational expressibility as well.  First, we justify our until-now flip use of phrases like ``rationally expressible'' (since the FTSP is only a statement about polynomials; no division allowed) extending the FTSP to rational functions:

\begin{theorem}[FTSP for rational functions]\label{ftsprational}
  Any rational function in $x_1,\ldots,x_n$ that is symmetric in $x_1,\ldots,x_n$ is a rational function of the elementary symmetric polynomials $\sigma_1,\ldots,\sigma_n$.
\end{theorem}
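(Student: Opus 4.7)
The plan is to reduce the rational case to the polynomial FTSP by symmetrizing the denominator. Write the given symmetric rational function as $R = f/g$ with $f,g$ polynomials and $g \neq 0$. In general $g$ itself need not be symmetric, so direct application of the polynomial FTSP is not possible; the trick is to multiply top and bottom by the product of all the ``Galois conjugates'' of $g$, that is, by $\prod_{\pi \neq e}\pi(g)$, where $\pi(g)$ denotes the polynomial obtained from $g$ by permuting its variables according to $\pi \in S_n$.

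Concretely, I would set
\[
g^* \;=\; \prod_{\pi \in S_n} \pi(g), \qquad f^* \;=\; f\cdot\prod_{\pi\neq e}\pi(g),
\]
so that $R = f^*/g^*$. The polynomial $g^*$ is symmetric because any $\tau \in S_n$ merely permutes the factors $\pi(g)$ among themselves. The polynomial $f^*$ is also symmetric: working in the field of rational functions, $f^* = R\cdot g^*$, so for each $\tau\in S_n$ we have $\tau(f^*) = \tau(R)\tau(g^*) = R\cdot g^* = f^*$. (Equivalently, the identity $R = f/g$ implies $f\cdot\pi(g) = g\cdot\pi(f)$ for every $\pi$, and chasing this through shows that the product defining $f^*$ is invariant under $\tau$.)

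Now invoke the polynomial FTSP twice: there exist polynomials $F$ and $G$ in $n$ indeterminates with $f^* = F(\sigma_1,\ldots,\sigma_n)$ and $g^* = G(\sigma_1,\ldots,\sigma_n)$. Since $g^*$ is a nonzero symmetric polynomial, $G$ is a nonzero polynomial, and therefore
\[
R \;=\; \frac{F(\sigma_1,\ldots,\sigma_n)}{G(\sigma_1,\ldots,\sigma_n)}
\]
is the desired expression of $R$ as a rational function of the elementary symmetric polynomials.

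There is really only one conceptual step here, namely the symmetrization trick in the first paragraph; the main obstacle (and the only thing worth writing carefully) is the verification that $f^*$ is symmetric, since the symmetry of $g^*$ is obvious but the symmetry of $f^*$ depends on the symmetry of $R$ itself. Everything after that is a direct appeal to the polynomial FTSP already established. Uniqueness of the representation, if desired, is not meaningful in the rational setting (one can always cancel common factors of $F$ and $G$), so the statement is simply existence.
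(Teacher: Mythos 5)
Your proof is correct and takes essentially the same approach as the paper: both multiply numerator and denominator by the product of all permuted copies of the denominator, observe that the new denominator is symmetric by construction and the new numerator is symmetric because it equals the (symmetric) function times the (symmetric) denominator, and then invoke the polynomial FTSP on each.
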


\begin{proof}
Let $f$ be such a function.  It is a quotient of polynomials $f=P/Q$.  Let $Q',Q'', \dots, Q^{(n!)}$ be the result of permuting the variables in $Q$ in every possible way.  Then
\[
f=\frac{PQ'Q''\cdots Q^{(n!)}}{QQ'Q''\cdots Q^{(n!)}}
\]
The denominator of this expression is invariant under all permutations of the $x_i$'s by construction, and $f$ is as well by assumption.  It follows that the numerator is also invariant (symmetric).  Thus $f$ is here expressed as a quotient between symmetric polynomials, which are polynomials in the $\sigma_i$ by the FTSP.
\end{proof}

What Galois did was to reveal the FTSP as just the first link in a chain of statements that tie types of symmetry to forms of rational expressibility. We give them without proof. The next chain link was already well-known in Galois' time:

\begin{theorem}
  \label{point-stabilizer}
  If $f$ is a rational function of $x_1,\dots,x_n$ that is symmetric under all permutations of the $x_i$'s that \emph{fix} $x_1$, then it is expressible as a rational function of $\sigma_1,\dots,\sigma_n$ \emph{and} $x_1$.
\end{theorem}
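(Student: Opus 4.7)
The plan is to reduce to the rational-function FTSP just proved, applied to the $n-1$ variables $x_2,\ldots,x_n$ while treating $x_1$ as a parameter.  Concretely, regard $f$ as a rational function in $x_2,\ldots,x_n$ whose coefficients are themselves rational functions of $x_1$.  The hypothesis that $f$ is fixed by every permutation that stabilizes $x_1$ is precisely the statement that, in this setup, $f$ is symmetric in $x_2,\ldots,x_n$.  Since nothing in the proofs of the polynomial or rational-function FTSP used any property of the coefficients beyond forming a field, the rational-function FTSP applies verbatim and expresses $f$ as a rational function of the elementary symmetric polynomials $\tau_1,\ldots,\tau_{n-1}$ in $x_2,\ldots,x_n$, whose coefficients may now depend rationally on $x_1$.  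Equivalently, $f$ is a rational function of $x_1,\tau_1,\ldots,\tau_{n-1}$.

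The next step is to translate the $\tau_k$ into expressions in $x_1$ and the $\sigma_k$.  Factoring
\[
z^n - \sigma_1 z^{n-1} + \cdots + (-1)^n \sigma_n = (z - x_1)\prod_{i=2}^n (z - x_i)
\]
and equating coefficients (or, equivalently, separating each $\sigma_k$ into the terms that contain $x_1$ and those that do not) yields the recurrence $\sigma_k = \tau_k + x_1 \tau_{k-1}$ for $k=1,\ldots,n$, with the conventions $\tau_0 = 1$ and $\tau_n = 0$.  Solving inductively gives
\[
\tau_k = \sigma_k - x_1 \sigma_{k-1} + x_1^2 \sigma_{k-2} - \cdots + (-1)^k x_1^k,
\]
a polynomial in $x_1$ and $\sigma_1,\ldots,\sigma_n$.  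Substituting these expressions into the rational function produced in the first step delivers $f$ as a rational function of $x_1$ and $\sigma_1,\ldots,\sigma_n$, as required.

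The main hurdle is conceptual rather than computational: one has to be comfortable applying the rational-function FTSP ``relativized'' to a coefficient field that itself contains a transcendental element $x_1$.  This is routine because the earlier proofs never exploited any special feature of the ground field, but it is the real substance of the argument; the manipulation in the second step is mere bookkeeping that converts between the two natural systems of elementary symmetric polynomials (those in all the variables, and those in the variables other than $x_1$).
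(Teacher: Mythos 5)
The paper states this and the following theorem ``without proof,'' so there is no argument in the text to check against; I can only evaluate your proposal on its own, and it is correct. You treat $x_1$ as a scalar adjoined to the ground field and apply the rational-function FTSP to $x_2,\ldots,x_n$: the hypothesis that $f$ is fixed by all permutations fixing $x_1$ is precisely that $f$ is symmetric in $x_2,\ldots,x_n$ over the field of rational functions in $x_1$, so $f$ is a rational function of $x_1$ and the elementary symmetric polynomials $\tau_1,\ldots,\tau_{n-1}$ of the remaining variables. The recurrence $\sigma_k=\tau_k+x_1\tau_{k-1}$, read off by expanding $(z-x_1)\prod_{i\geq 2}(z-x_i)$, inverts to $\tau_k=\sigma_k-x_1\sigma_{k-1}+\cdots+(-x_1)^k$, a polynomial in $x_1$ and the $\sigma_j$, which completes the translation. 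The one step worth stating carefully is exactly the one you flag: neither of the paper's FTSP proofs uses anything about the ground field beyond the field axioms (the polynomial version in fact uses no division at all), so both apply unchanged with $\mathbb{Q}(x_1)$ as the coefficient field. This ``base change'' argument is the standard route by which the theorem is deduced from the FTSP, and your proof is complete as written.
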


But the next is due to Galois and appears as Lemma III in his essay:

\begin{theorem}
  \label{galoisresolvent}
  If $V$ is a rational function of $x_1,\dots,x_n$ that is not left fixed by \emph{any} nontrivial permutation of the $x_i$'s, then \emph{every} rational function of the $x_i$'s is expressible as a rational function of the $\sigma_i$'s and $V$.
\end{theorem}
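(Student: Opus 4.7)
The plan is to adapt a Lagrange-interpolation construction to the $S_n$-orbit of $V$. Since $V$ has trivial stabilizer under $S_n$, its orbit $V = V_1, V_2, \ldots, V_N$ (with $N = n!$) consists of $N$ distinct rational functions, and for each $k$ there is a unique $\pi_k \in S_n$ with $\pi_k(V) = V_k$. Given an arbitrary rational function $f$ of the $x_i$, I define its conjugates $f_k = \pi_k(f)$, so that the pairs $(V_k, f_k)$ index the orbit in a consistent way.

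Next I would introduce two auxiliary polynomials in a fresh indeterminate $y$:
\[
D(y) = \prod_{k=1}^{N}(y - V_k), \qquad g(y) = \sum_{k=1}^{N} f_k \prod_{j \neq k}(y - V_j).
\]
The key observation is that any permutation of the $x_i$ merely permutes the pairs $(V_k, f_k)$ among themselves, so the coefficients of $D(y)$ and $g(y)$ as polynomials in $y$ are symmetric rational functions of the $x_i$. By the FTSP for rational functions, these coefficients are therefore rational functions of $\sigma_1,\ldots,\sigma_n$ alone.

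Finally, I would recover $f$ by Lagrange interpolation. Substituting $y = V$ into $g$ kills every term except the $k=1$ summand, giving $g(V) = f \cdot \prod_{j \neq 1}(V - V_j)$, and this product is nonzero because the $V_j$ are distinct. To put the denominator into the required form, let $h(y) = D(y)/(y - V)$; the polynomial division produces a polynomial in $y$ whose coefficients are polynomials in $V$ and in the coefficients of $D$, hence rational functions of $\sigma_1,\ldots,\sigma_n$ and $V$. Evaluating at $y = V$ gives $h(V) = \prod_{j \neq 1}(V - V_j)$, so
\[
f = \frac{g(V)}{h(V)}
\]
exhibits $f$ as a rational function of $\sigma_1,\ldots,\sigma_n$ and $V$, as desired.

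The main subtlety to check carefully is that the coefficients of $D(y)$ and $g(y)$ really are $S_n$-invariant. This is where the trivial-stabilizer hypothesis is used, essentially twice: it makes the assignment $V_k \mapsto f_k = \pi_k(f)$ unambiguous (any two $\pi_k$ sending $V$ to $V_k$ differ by an element of the stabilizer of $V$, which is trivial), so that under any permutation of the $x_i$ the $V_k$ and the $f_k$ are relabeled by the same permutation of $\{1,\ldots,N\}$; and it guarantees the $V_j$ are pairwise distinct, so that $h(V) \neq 0$ and the final formula makes sense.
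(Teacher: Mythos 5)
The paper states this theorem without proof (along with the preceding one, both attributed to Galois's \emph{M\'emoire}), so there is no in-paper argument to compare yours against. Judged on its own, your proof is correct and is in fact the classical Lagrange-interpolation argument that Galois himself used in essentially this form. The key points all check out: since $V$ has trivial stabilizer, left multiplication in $S_n$ gives a well-defined, simultaneous relabelling of the pairs $(V_k,f_k)$ under any permutation of the $x_i$, so the coefficients of $D(y)$ and $g(y)$ in the auxiliary indeterminate $y$ are genuinely symmetric and hence (by the FTSP for rational functions) rational in the $\sigma_i$; the $V_j$ are pairwise distinct as rational functions, so $\prod_{j\neq 1}(V - V_j)\neq 0$ and the interpolation formula is meaningful; and replacing your polynomial quotient $h(y)=D(y)/(y-V)$ by the slightly slicker $D'(V)$ (which equals the same product) would streamline the last step but changes nothing substantive. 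One small point worth making explicit: you should fix the indexing so that $\pi_1$ is the identity, hence $V_1=V$ and $f_1=f$, which you implicitly assume when you say the $k=1$ term survives. With that said, the argument is complete and is the standard route to this result.
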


We can summarize the connections between symmetry and rational expressibility in a table as follows.
\begin{center}
\begin{tabular}{l|l}
If it is invariant under\ldots & then it is rationally expressible in\ldots\\
\hline \quad all permutations & \quad $\sigma_1,\dots,\sigma_n$\\
\quad all permutations that fix $x_1$ & \quad $\sigma_1,\dots,\sigma_n$ and $x_1$\\
\quad any subset, or no permutations at all & \quad $\sigma_1,\dots,\sigma_n$ and $V$
\end{tabular}
\end{center}

This leads us to the statement of Galois' famous Proposition~I. Theorems \ref{ftsprational}, \ref{point-stabilizer}, and \ref{galoisresolvent} are all simultaneously lemmas for and special cases of this grand result, which forms one half of what is now called the \emph{Fundamental Theorem of Galois Theory}. The following paraphrases Galois' statement, as his original language presupposes some conventions he set up previously.

\begin{PropositionI}
  Let $f$ be a polynomial with coefficients $\sigma_1,\dots,\sigma_n$.  Let $x_1,\dots,x_n$ be its roots.\footnote{Up until now, the $x_i$ have been formal symbols, and the $\sigma_i$ have been formal polynomials in them, but for this statement the $\sigma_i$ are prior to the $x_i$ and may be elements of any field containing $\mathbb{Q}$. Galois tacitly assumed that the roots $x_i$ of $f$ \emph{exist}, somewhere, in some sense. Today we would say he assumes the existence of a \emph{splitting field}. Most mathematicians prior to the nineteenth century working in algebra made this same assumption without question. Gauss famously argued that it needed justification, in motivating his proof of the so-called \emph{Fundamental Theorem of Algebra}, that every integer polynomial splits into linear factors over $\mathbb{C}$. See \cite[pp.~912--3]{edwards12}.}\label{splittingfield}  Let $U,V,\ldots$ be some numbers that are rational functions of the $x_i$'s that we suppose are known to us.  Then, there exists a group $G$ of permutations of the $x_i$'s such that the rational functions of the $x_i$'s fixed under all the permutations in $G$ are exactly those whose values are rationally expressible in terms of $\sigma_1,\dots,\sigma_n$ and $U,V,\ldots$\footnote{It is worth mentioning a possible source of ambiguity pointed out by Edwards in \cite{edwards12}. It makes sense to speak of a permutation acting on a rational function of a set of formal unknowns, by permuting the unknowns. It doesn't make sense to speak of it acting on a numerical value---what's being permuted? Galois is talking about something in between: a rational function of \emph{roots of $f$}. It thus may be viewed both as a formal object (treat each root as an unknown, so we can apply a permutation to them) and a numerical object (evaluate the rational function on the given roots). When he speaks of applying a permutation, he is taking the formal viewpoint. But when he speaks of the function being fixed, he is speaking of its numerical value, as he makes clear in a footnote. One must imagine permuting the unknowns in a formal rational function of unknowns, and then obtaining a numerical value by substituting the given roots for the unknowns and evaluating the expression obtained thereby. Thus a clearer statement of the theorem would be: there exists a group $G$ of permutations such that a rational function evaluates to the same numerical value after each of the permutations of $G$ has been applied to its unknowns if and only if this same numerical value is rationally expressible in terms of $\sigma_1,\dots,\sigma_n$ and $U,V,\dots$.}
\end{PropositionI}

If you have studied Galois theory, this formulation may feel unfamiliar to you.  To see that it is really the same thing you have seen before, consider that the set of quantities that are rational functions of $\sigma_1,\dots,\sigma_n$ forms a field ($f$'s \emph{coefficient field}); similarly for the set of quantities that are rational functions of $x_1,\dots,x_n$ ($f$'s \emph{splitting field}).  The set of rational functions of $\sigma_1,\dots,\sigma_n,U,V,\dots$ is some extension of the coefficient field contained in the splitting field. So we can state Galois' Proposition~I in the following modern  way: Given a polynomial $f$ and a field $K$ lying between $f$'s coefficient field and $f$'s splitting field, there exists an action of some permutation group $G$ on the roots of $f$ which extends to an action on the splitting field of $f$ and such that the fixed field of this action is exactly $K$.\footnote{More succinctly, every intermediate field corresponds to some group. The other half of what we now call the Fundamental Theorem of Galois Theory, alluded to above, states that if you find the group $G$ corresponding to the coefficient field itself (which is called the \emph{Galois group of $f$}), then every subgroup of $G$ corresponds to some intermediate field $K$. There is thus a one-to-one correspondence between fields intermediate between $f$'s coefficient and splitting fields, on the one hand, and subgroups of $f$'s Galois group, on the other.} This powerful and famous result ties in a very precise way that which is rationally expressible (the elements of a field) to a given type of symmetry (the group).

We hope to have shown you that the FTSP contains the first whisper of this connection.  If you are interested to learn more, Harold Edwards' 2012 article in the \emph{Notices of the AMS} \cite{edwards12} explicates some of Galois' own proofs of the above propositions in modern language.  This article is perhaps best appreciated by reading it alongside Galois' original essay, which is printed in English translation in several sources (see for instance \cite{edwards}, \cite{hawking}, or the recent, particularly comprehensive \cite{neumann}).

\section{Addendum (by Ben Blum-Smith)}

After we completed work on this article, I learned of the following beautiful paper: 

Garsia, A. (1980). Combinatorial methods in the theory of Cohen-Macaulay rings. {\em Advances in Mathematics}, 38:229-266.

Garsia's work contains a precedent for the ``brick-stacking" argument found in the present article. In the context of giving explicit descriptions of certain rings as free modules over certain subrings, Garsia used a diagram much like our Figure 1 to argue a point much like our Theorem 3. See especially Figures 1 and 2 on pp. 257-258 of Garsia's paper, and the proof of Lemma 6.1 on p. 258. The context is different but the core insight is the same.

\bibliographystyle{apalike}
\bibliography{symmetric}

\end{document}